\theoremstyle{definition}{
\newtheorem{Def}{{\rm Definition}}
\newtheorem{Ex}{{\rm Example}}
\newtheorem{Rem}{{\rm Remark}}

}
\theoremstyle{plain}
{
\newtheorem{Cor}{Corollary}
\newtheorem{Prop}{Proposition}
\newtheorem{Thm}{Theorem}
\newtheorem{MainThm}{Main Theorem}
\newtheorem{MainCor}{Main Corollary}

}
\begin{document}
\title[Manifolds admitting no special generic maps and their cohomologies]{Closed manifolds admitting no special generic maps whose codimension are negative and their cohomology rings}
%【REVISE0】 "map" in the titles are changed to "maps" .
\author{Naoki Kitazawa}
\keywords{Singularities of differentiable maps; fold maps and special generic maps. Cohomology classes. Higher dimensional closed and simply-connected manifolds.\\
\indent {\it \textup{2020} Mathematics Subject Classification}: Primary~57R45. Secondary~57R19.}
\address{Institute of Mathematics for Industry, Kyushu University, 744 Motooka, Nishi-ku Fukuoka 819-0395, Japan\\
 TEL (Office): +81-92-802-4402 \\
 FAX (Office): +81-92-802-4405 \\
}
\email{n-kitazawa@imi.kyushu-u.ac.jp}
\urladdr{https://naokikitazawa.github.io/NaokiKitazawa.html}
\maketitle
\begin{abstract}
%Morse functions always exist densely on closed manifolds and singular points, appearing discretely, tell us information on homology groups and some information on homotopy of the manifolds. As a specific case, Reeb's theorem on Morse functions on closed manifolds states that a homotopy sphere is characterized as a manifold admitting just $2$ singular points except $4$-dimensional homotopy spheres:  a $4$-dimensional standard sphere is characterized in this way.

{\it Special generic} maps are higher dimensional versions of Morse functions with exactly two singular points, characterizing spheres topologically except $4$-dimensional cases: in these cases unit spheres are characterized.
Canonical projections of unit spheres are special generic. In suitable cases, it is easy to construct special generic maps on manifolds represented as connected sums of products of spheres for example. It is an interesting fact that these maps restrict the topologies and the differentiable structures admitting them strictly in various cases. For example, {\it exotic} spheres, which are not diffeomorphic to standard spheres, admit no special generic maps into some Euclidean spaces in considerable cases.
%【REVISE0】 map → maps .

In general, it is difficult to find (families of) manifolds admitting no such maps of suitable classes. The present paper concerns a new result on this work where cohomology rings of the manifolds are key objects. %【REVISE0】 We have changed after "where" in the sentence into "cohomology rings of the manifolds are key objects" .
We can see that several important manifolds such as closed symplectic manifolds and real projective spaces admit no special generic maps into any connected non-closed manifold in considerable cases for example.
%【REVISE0】 closed symplectic manifolds, real projective spaces, and so on, →  several important manifolds such as closed symplectic maifold and real projective spaces .
%【REVISE0】 map → maps .
%【REVISE0】 a connected open → any connected non-closed .
%  For example, we investigate algebraic topological restrictions on {\it Reeb spaces} of the fold maps. %They are essential tools in studying manifolds by using generic maps, and the source manfolds. We also %show flexibility of homology groups of the Reeb spaces and the source manifolds, for example.    

% Abstract text, usually no more than 200 words.
% Avoid bibliographic references (\cite) and complicated mathematics.
% Please do not use custom macros here, as this abstract has to 
% be able to stand alone.  You may use standard tex/latex/AMS macros.
\end{abstract}

% Leave these items like this, and the journal will fill them in.
% \received{Month Day, Year}   % receive date (for example: October 11, 1999)
% \revised{Month Day, Year}    % date of revision; omit, if no revision;
%                             % if multiple revisions, separate by commas
% \published{Month Day, Year}  % publish date\submitted{Bill Murray}      % Name of Journal's Editor, 
% who handled Article 
% \volumeyear{2014} % Volume Year
% \volumenumber{16} % Volume Number 
% \issuenumber{2}   % Issue Number
% \startpage{1}     % PageNumber of first page
% \articlenumber{1} % Sequence number of article within issue
% If copyright is retained by author, comment this out:
% \owner{International Press}

\maketitle
\section{Introduction.}
\label{sec:1}

{\it Special generic} maps are higher dimensional versions of Morse functions with exactly two singular points, characterizing spheres topologically except $4$-dimensional cases: in these cases standard spheres are characterized as smooth manifolds. They have been attractive objects in understanding topologies and differentiable structures of manifolds in geometric ways, which is a fundamental and important study in geometry of manifolds.

%{\it Fold} maps are, in short, higher dimensional versions of Morse functions and important tools in studying geometric properties of manifolds in the branch of the singularity theory of differentiable maps and application to geometry of manifolds as Morse functions are in so-called Morse theory: Morse theory is in a sense regarded as specific classical theory of the singularity theory of differentiable maps. $7$-dimensional manifolds form an interesting class of manifolds. Especially, Milnor's discovery of $7$-dimensional {\it exotic} homotopy spheres, which are homotopy spheres having differentiable structures different from those of standard spheres \cite{milnor}, has made this class more attaractive and this class is still attractive. Recent attractive studies on these manifolds are \cite{crowleyescher}, \cite{crowleynordstrom} and \cite{kreck} for example: note also that these manifolds are closed and simply-connected. 
\subsection{Notation on differentiable maps and bundles.}
\label{subsec:1.1}
Throughout the present paper, manifolds and maps between manifolds are smooth (of class $C^{\infty}$). Diffeomorphisms on manifolds are always assumed to be smooth. The {\it diffeomorphism group} of a manifold is the group of all diffeomorphisms on the manifold. The structure groups of bundles whose fibers are manifolds are assumed to be subgroups of the diffeomorphism groups or the bundles are {\it smooth} unless otherwise stated. 
Moreover, connected sums of smooth manifolds are considered in the smooth category.
%【REVISE0】 We add a sentence .

${\mathbb{R}}^k$ denotes the $k$-dimensional Euclidean space and we regard this as a natural smooth manifold. This is also regarded as a Riemannian manifold with the standard Euclidean metric and $||x|| \geq 0$ denotes the metric between $x \in {\mathbb{R}}^k$ and the origin $0 \in {\mathbb{R}}^k$.
$\mathbb{R}$ is for ${\mathbb{R}}^1$ and the real number field.
$\mathbb{Z} \subset \mathbb{R}$ denotes the integer ring. We have a natural closed submanifold $S^k:=\{x \mid ||x||=1\} \subset {\mathbb{R}}^{k+1}$ for $k \geq 0$. This is the $k$-dimensional unit sphere. This is a two-point set endowed with the discrete topology for $k=0$ and a $k$-dimensional compact and connected submanifold with no boundary.
 %【REVISE0】 We have deleted "of dimension $k$" and added "$k$-dimensional" . 
 A smooth manifold homeomorphic to a unit sphere is called a {\it homotopy sphere}. If it is (not) diffeomorphic to any unit sphere, then it is called a {\it standard} (resp. an {\it exotic}) sphere.
 %【REVISE0】 an {\it standard} (resp. {\it exotic}) → a {\it standard} (resp. an {\it exotic}) .
  The existence of exotic spheres is well-known for cases where the dimensions are greater than $6$. It is well-known that for cases where the dimensions are $1,2,3,5$, the non-existence is well-known. 
%【REVISE0】 We add a paragraph.  
 We have another natural closed submanifold $D^k:=\{x \mid ||x|| \leq 1\} \subset {\mathbb{R}}^k$ for $k \geq 1$. This is the $k$-dimensional unit disk. This is a $k$-dimensional compact and connected submanifold with no boundary.   
 %【REVISE0】 We have deleted "of dimension $k$" and added "$k$-dimensional" . 
A {\it linear} bundle is a smooth bundle whose fiber is regarded as a unit sphere or a unit disk in a Euclidean space and whose structure group acts linearly in a canonical way on the fiber.
% 【REVISE0】 disc → disk .
A {\it singular} point $p \in X$ of a smooth map $c:X \rightarrow Y$ is a point at which the rank of the differential $dc$ is smaller than both the dimensions $\dim X$ and $\dim Y$. We call the set $S(c)$ of all singular points the {\it singular set} of $c$. We call $c(S(c))$ the {\it singular value set} of $c$. We call $Y-c(S(c))$ the {\it regular value set} of $c$. A {\it singular {\rm (}regular{\rm )} value} is a point in the singular (resp. regular) value set of the map.

\subsection{The definition of a special generic map, the topologies and the differentiable structures of manifolds admitting special generic maps, meanings in algebraic topology and differential topology of manifolds and Main Theorems.}
\label{subsec:1.2}
%【REVISE0】 the main theorem → Main Theorems .
Let $m>n \geq 1$ be integers. A smooth map from an $m$-dimensional smooth manifold with no boundary into an $n$-dimensional smooth manifold with no boundary is said to be a {\it special generic} map if at each singular point $p$, the map is represented as
$$(x_1, \cdots, x_m) \mapsto (x_1,\cdots,x_{n-1},\sum_{k=n}^{m}{x_k}^2)$$
 for suitable coordinates. The restriction map to the singular set is an immersion.

We can define for the cases $m=n$. However, we concentrate on cases where $m>n$ hold.

We can define {\it fold} maps as higher dimensional versions of Morse functions similarly and the class of special generic maps is a proper subclass of that of fold maps. See \cite{golubitskyguillemin} for introductory and systematic explanations for example. As studies of the author, see also \cite{kitazawa, kitazawa2, kitazawa3} for example. 
%【REVISE0】 \cite{kitazawa}, \cite{kitazawa2} and \cite{kitazawa3} → \cite{kitazawa, kitazawa2, cite{kitazawa3}.
However, general fold maps are not studied in the present paper.

%An {\it exotic} homotopy sphere is a homotopy sphere which is not diffeomorphic to any standard sphere. \cite{milnor} is the first article on exotic homotopy spheres and \cite{kervairemilnor} and
% so on are also on these homotopy spheres.
%【REVISE0】 We delete the paragraph.
 
Canonical projections of unit spheres are special generic. Every homotopy sphere of dimension greater than $2$
%【REVISE0】 We change "$>k$" to "greater than $k$" throughout the paper. 
 except $4$-dimensional exotic spheres, which are still undiscovered, admits a special generic map into the plane whose restriction to the singular set is an embedding and whose singular value set is an embedded circle (\cite{saeki}).
 %【REVISE0】 We change "exotic homotopy" to "exotic" througout the present paper.
For integers $m>n \geq 1$, on an $m$-dimensional manifold represented as a connected sum of manifolds represented as products of two standard spheres such that the dimension of either of the two sphere for each manifold is smaller than $n$, we can obtain a special generic map into ${\mathbb{R}}^n$ (we will present in Example \ref{ex:1}). We introduce interesting facts from the viewpoint of differential topology in \cite{saeki,  saeki2, saekisakuma,saekisakuma2, wrazidlo} for example. 
As studies before these recent ones, \cite{burletderham, calabi, furuyaporto, sakuma, sakuma2} are also related important studies.
%【REVISE0】 We have changed the two sentences introducing several references . 

\begin{Thm}[\cite{calabi,saeki,saeki2}]
	%【REVISE0】 We use \cite{calabi,saeki,saeki2}.
\label{thm:1}
Exotic homotopy spheres of dimension $m>3$ do not admit special generic maps into ${\mathbb{R}}^{m-3}$, ${\mathbb{R}}^{m-2}$ and ${\mathbb{R}}^{m-1}$. 
\end{Thm}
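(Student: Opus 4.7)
My plan is to apply the structure theorem for special generic maps of Burlet--de Rham and Saeki, then combine the resulting decomposition of $\Sigma$ with the $h$-cobordism theorem and the (smooth) Poincar\'e conjecture to force any homotopy sphere admitting such a map to be standard.

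Suppose $\Sigma^m$ is a homotopy sphere with $m>3$ and that $f:\Sigma\to \mathbb{R}^n$ is special generic with codimension $c:=m-n\in\{1,2,3\}$. First I would introduce the Stein factorization $f=\pi\circ q_f$: the quotient space $W_f$ of $\Sigma$ by connected components of the fibers of $f$ is a compact smooth $n$-manifold with nonempty boundary, $\pi:W_f\to \mathbb{R}^n$ is an immersion, and $q_f$ restricts to a diffeomorphism $S(f)\to\partial W_f$ and to a smooth $S^c$-bundle over $\mathrm{Int}\,W_f$, all of this coming from the quadratic normal form of $f$ at singular points.

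Next I would use the Burlet--de Rham/Saeki reconstruction
$$
\Sigma\;\cong\;(\partial W_f\times D^{c+1})\;\cup_{\partial W_f\times S^c}\;E,
$$
where $E$ is the total space of a linear $S^c$-bundle over $W_f$ and the gluing identifies $\partial W_f\times S^c$ with the corresponding part of $\partial E$. Because $q_f$ has connected fibers and $\Sigma$ is simply connected, $W_f$ is simply connected. A Mayer--Vietoris computation on the above decomposition, using $H_*(\Sigma)=H_*(S^m)$ together with the vanishing of $\widetilde H_*(S^c)$ in intermediate degrees, then forces $\widetilde H_*(W_f)=0$. Hence $W_f$ is contractible.

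I would then upgrade contractibility to $W_f\cong D^n$. The boundary $\partial W_f$ is a closed simply connected $(n-1)$-manifold with the homology of $S^{n-1}$ by Poincar\'e--Lefschetz duality; by the (smooth) Poincar\'e conjecture in the dimensions relevant to $m>3$ and $c\leq 3$, it is diffeomorphic to $S^{n-1}$. Smale's characterization of the disk (the $h$-cobordism theorem applied to $W_f$ cut along a collar of its boundary) then yields $W_f\cong D^n$. Once this is established, the $S^c$-bundle over the contractible base is trivial, so $E\cong D^n\times S^c$, and
$$
\Sigma\;\cong\;(S^{n-1}\times D^{c+1})\cup_{S^{n-1}\times S^c}(D^n\times S^c)\;=\;\partial(D^n\times D^{c+1})\;\cong\;S^m,
$$
contradicting the assumption that $\Sigma$ is exotic.

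The main obstacle I anticipate is the upgrade from the contractibility of $W_f$ to the identification $W_f\cong D^n$: this is where the hypothesis $m>3$ is crucially used, through the Poincar\'e conjecture and $h$-cobordism. The low codimension $c\leq 3$ enters twice, first in making the Mayer--Vietoris analysis tight enough to yield actual contractibility of $W_f$ (rather than just partial homology vanishing, which would fail for larger $c$ because of nontrivial characteristic classes of the $S^c$-bundle $E\to W_f$), and second in keeping the fiber low-dimensional so that the gluing along $\partial W_f\times S^c$ is controlled.
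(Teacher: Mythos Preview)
The paper does not prove Theorem~\ref{thm:1} at all; it is quoted from \cite{calabi,saeki,saeki2} as background, so there is no in-paper argument to compare against.  Your outline is nonetheless the classical Saeki strategy: Reeb/Stein factorization, linearity of the $S^{c}$-bundle for $c=m-n\in\{1,2,3\}$, and recognition of $\Sigma$ as the boundary of the associated disk bundle over $W_f$.

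Two places need tightening.  First, in your displayed decomposition you already write the collar piece as a \emph{product} $\partial W_f\times D^{c+1}$; at that stage it is only a linear $D^{c+1}$-bundle, and the Mayer--Vietoris step that ``forces $\widetilde H_{\ast}(W_f)=0$'' is less automatic than you indicate.  Proposition~\ref{prop:2} only gives $H_j(W_f)\cong H_j(\Sigma)=0$ for $1\le j\le c$; killing the remaining groups in the range $c+1\le j\le n-1$ genuinely uses Poincar\'e--Lefschetz duality for the pair $(W,\Sigma)$ together with the Euler class of the sphere bundle, and this is where $c\le 3$ is doing work, not merely ``the vanishing of $\widetilde H_{\ast}(S^c)$ in intermediate degrees''.

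Second, and more seriously, the step ``$W_f$ contractible with $\partial W_f\cong S^{n-1}$ implies $W_f\cong D^n$'' is the smooth $h$-cobordism/Poincar\'e statement in dimension~$n$, which is \emph{open} for $n=4$; yet $n=4$ does occur here (e.g.\ $m=5,c=1$ or $m=7,c=3$).  The route taken in \cite{saeki} avoids this by working one dimension up with the smooth $(m{+}1)$-manifold $W$ of Proposition~\ref{prop:1}\,(\ref{prop:1.1}) --- the total space of the associated linear $D^{c+1}$-bundle over $W_f$, which is smooth precisely because $c\le 3$.  One shows $W\simeq W_f$ is contractible with $\partial W=\Sigma$; then for $m\ge 5$ the $h$-cobordism theorem applies to $W$ itself ($\dim W=m+1\ge 6$) and yields $W\cong D^{m+1}$, hence $\Sigma=\partial W\cong S^m$.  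This also dissolves your final worry about the gluing: you never have to match trivializations along $S^{n-1}\times S^c$, because $\Sigma=\partial D^{m+1}$ on the nose.
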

\begin{Thm}[\cite{wrazidlo}]
\label{thm:2}
$7$-dimensional oriented homotopy spheres of $14$ types of all $28$ types do not admit special generic maps into ${\mathbb{R}}^3$.   
%【REVISE0】 all the 28 → all 28 .
\end{Thm}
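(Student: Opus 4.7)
The plan is to refine the approach of Theorem~\ref{thm:1} by using the Eells--Kuiper $\mu$-invariant, which gives an isomorphism between the group $\Theta_7$ of oriented diffeomorphism classes of homotopy $7$-spheres and $\mathbb{Z}/28\mathbb{Z}$.

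Suppose $\Sigma^7$ is an oriented homotopy $7$-sphere admitting a special generic map $f \colon \Sigma \to \mathbb{R}^3$. First I would analyze the Stein factorization $f = \bar f \circ q_f$, where $\bar f \colon W_f \looparrowright \mathbb{R}^3$ is an immersion of a compact $3$-manifold with boundary, and $q_f \colon \Sigma \to W_f$ identifies connected components of fibers. The local normal form of $f$ at singular points exhibits $q_f$ as a smooth $S^4$-bundle over $\mathrm{int}(W_f)$, linear in a collar of $\partial W_f$, and as a diffeomorphism onto $\partial W_f$ when restricted to the singular set $S(f)$. The simple connectivity of $\Sigma$ allows one to arrange that $W_f$ is simply connected, and the codimension-zero immersion $\bar f$ forces $TW_f$ to be trivial.

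Next I would construct a canonical compact spin $8$-manifold $V$ with $\partial V \cong \Sigma$, obtained (after smoothing corners) as the total space of the associated linear $D^5$-bundle over $W_f$. The key geometric input is that $V$ is homotopy equivalent to the $3$-dimensional complex $W_f$, so $H^4(V) = 0$ and $\sigma(V) = 0$; moreover, $V$ is stably parallelizable, since the first Pontrjagin classes of both $TW_f$ and the rank-$5$ disk bundle vanish trivially in $H^4(W_f) = 0$.

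The heart of the argument is the computation of $\mu(\Sigma) \in \mathbb{Z}/28\mathbb{Z}$ from this bounding $V$. The Eells--Kuiper formula expresses $\mu(\Sigma)$ in terms of $p_1(V)^2$ and $\sigma(V)$; in our situation both vanish, but one must still capture the finer twisting data recording how the smooth $S^4$-bundle over $\mathrm{int}(W_f)$ deviates from linear. Such data is parametrized by relative homotopy groups of $\mathrm{Diff}/O$, and the target is to show that their contribution to $\mu$ always lies in the unique index-two subgroup $2\,\mathbb{Z}/28\mathbb{Z} \cong \mathbb{Z}/14\mathbb{Z}$. Together with matching existence constructions realizing the remaining $14$ admissible types, this produces exactly $14$ obstructed oriented homotopy $7$-spheres as stated.

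The hardest step will be establishing this precise index-two divisibility of the $\mu$-invariant: one must relate the exotic twisting of the Reeb bundle over $\mathrm{int}(W_f)$ to an $\eta$-invariant or spectral-flow calculation that refines the usual signature considerations. It is this extra factor of $2$, beyond the generic $\mathrm{mod}\;8$ divisibility of the signature for spin $8$-manifolds, that distinguishes Theorem~\ref{thm:2} from the more elementary Theorem~\ref{thm:1}.
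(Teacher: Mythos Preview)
The paper does not prove Theorem~\ref{thm:2} at all; it is simply quoted from \cite{wrazidlo} as background, so there is no ``paper's own proof'' to compare your attempt against.

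As to your proposal itself: it is a plan, not a proof, and you say so explicitly. You correctly identify the structural ingredients (the Reeb space $W_f$, the bounding $8$-manifold $V$, the vanishing of $H^4(V)$ and $\sigma(V)$) and the right invariant (the Eells--Kuiper $\mu$), and this is indeed the architecture of Wrazidlo's argument. But the entire content of the theorem sits in the step you defer: showing that the twisting of the smooth $S^4$-bundle over ${\rm Int}\,W_f$ forces $\mu(\Sigma)$ into the index-two subgroup. Phrases like ``one must relate \ldots\ to an $\eta$-invariant or spectral-flow calculation'' and ``together with matching existence constructions'' are placeholders for two substantial and independent arguments, neither of which you have carried out. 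Note also a technical wrinkle you glossed over: for $m-n=4$ the $S^4$-bundle need not be linear and the bounding $W$ of Proposition~\ref{prop:1} is a priori only PL, so the construction of a \emph{smooth} spin coboundary $V$ already requires care beyond what you wrote.
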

\cite{eellskuiper} is, for example, on classical theory of $7$-dimensional homotopy spheres other than \cite{milnor}.
\begin{Thm}[\cite{saeki}]
\label{thm:3}
A closed and connected manifold admits a special generic map into the plane if and only if it is diffeomorphic to one of the following manifolds or represented as a connected sum of the following manifold.
%【REVISE0】 We do not use phrase using "～ characterized ～"
\begin{enumerate}
\item A homotopy sphere of dimension greater than or equal to $2$ which is not a $4$-dimensional exotic sphere.
%【REVISE0】 greatern than → greater than or equal to .
\item The total space of a bundle whose fiber is a homotopy sphere not being a $4$-dimensional exotic sphere over the circle.
%【REVISE0】 a sphere → a homotopy sphere .
\end{enumerate}
\end{Thm}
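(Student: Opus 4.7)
The plan is to use the Stein factorization of a special generic map $f\colon M \to \mathbb{R}^2$ to encode $M$ by a compact surface with boundary embedded in the plane, and then reconstruct $M$ piece by piece.

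First I would form the quotient $W_f = M/{\sim}$, where $p \sim p'$ precisely when $p$ and $p'$ lie in the same connected component of the same fiber of $f$. This produces a factorization $f = \pi \circ q_f$ with $q_f\colon M \to W_f$ the quotient and $\pi\colon W_f \to \mathbb{R}^2$ continuous. From the local normal form
$$(x_1,\dots,x_m) \mapsto (x_1,\, x_2^{\,2}+\cdots+x_m^{\,2})$$
at each singular point, $W_f$ inherits the structure of a smooth compact $2$-manifold with boundary, with $\partial W_f = q_f(S(f))$, and $\pi$ becomes an immersion whose restriction to $\partial W_f$ corresponds to $f|_{S(f)}$. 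A general-position perturbation (or direct appeal to Saeki's analysis) allows $\pi$ to be taken as an embedding, so $W_f$ sits in $\mathbb{R}^2$ as a connected compact surface with boundary and is therefore a closed disk with $k \geq 0$ disjoint open disks removed.

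Next I would analyze $q_f$. Over $\mathrm{int}(W_f)$ the map is a smooth fiber bundle whose fiber, by a Reeb-style argument using generic fibers of $f$ and the local normal form, is a homotopy $(m-2)$-sphere; over a collar of each boundary component $q_f$ is a linear $D^{m-1}$-bundle collapsing radially along $\partial W_f$. If $k = 0$ then $W_f$ is a disk and $M$ is built by gluing a $D^2$-bundle of homotopy $(m-2)$-spheres to a twisted $D^{m-1}$-tube over $S^1$ along $S^1 \times S^{m-2}$; the resulting manifold is a twisted sphere in the sense of Reeb, hence a homotopy $m$-sphere, and a separate check excludes the $4$-dimensional exotic case. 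If $k = 1$ then $W_f$ is an annulus and the two collar tubes glue to a fiber bundle over $S^1$ whose fiber is a homotopy $(m-1)$-sphere. For $k \geq 2$, cutting $W_f$ along arcs connecting boundary components to a distinguished one reduces $k$ by one per cut and realizes $M$ as an iterated connected sum of pieces from the $k=0$ and $k=1$ cases.

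For the converse, I would exhibit explicit special generic maps on each basic piece: a radial-height construction on a non-$4$-exotic homotopy sphere realizes such a map onto a disk, and fibrewise collapsing a homotopy-sphere bundle over $S^1$ onto a diameter of the fiber yields a special generic map onto an annulus. A connected sum of manifolds corresponds on the target to attaching two surfaces along boundary arcs, and a standard local modification near the attaching arc assembles the two maps into a single special generic map on the connected sum. The main obstacle is the reconstruction step: verifying that the $D^{m-1}$-bundle near $\partial W_f$ is indeed linear with a sufficiently well-behaved structure group, that the sphere and disk bundles over $\mathrm{int}(W_f)$ can be trivialized enough to yield the claimed twisted-sphere and sphere-bundle descriptions, and that in dimension $4$ a Reeb-type twisted sphere arising this way is not a genuinely exotic $4$-sphere --- the last being where the ``not a $4$-dimensional exotic sphere'' hypothesis truly enters.
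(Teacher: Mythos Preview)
The paper does not prove Theorem~\ref{thm:3}; it is quoted from \cite{saeki} as background and no argument for it appears anywhere in the text. There is therefore no ``paper's own proof'' to compare your attempt against.

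That said, your sketch follows the standard line of Saeki's original argument (Reeb/Stein factorization, the planar compact surface $W_f$, disk- and sphere-bundle structure near and away from $\partial W_f$, arc-cuts to realize connected sums), and the paper's Propositions~\ref{prop:1} and \ref{prop:2} record exactly the structural facts you invoke. Two small points are worth tightening. First, the step ``$\pi$ can be taken to be an embedding'' is not needed and is not how the conclusion is usually reached: what matters is that $W_f$ \emph{immerses} in $\mathbb{R}^2$, hence is orientable of genus~$0$ with nonempty boundary, so abstractly a disk with $k$ holes; you do not need, and should not claim, an embedded representative. Second, by Proposition~\ref{prop:1} the fibre of $q_f$ over interior points of $W_f$ is the \emph{standard} $S^{m-2}$, not merely a homotopy $(m-2)$-sphere; the homotopy spheres in the statement arise only after gluing the two $D^{m-1}$-tubes (case $k=0$) or assembling the bundle over $S^1$ (case $k=1$), and it is in that gluing step that the exclusion of $4$-dimensional exotic spheres is handled.
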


\begin{Thm}[\cite{saeki, saekisakuma}]
\label{thm:4}
%【REVISE0】 \cite{saeki} and \cite{saekisakuma} → \cite{saeki, saekisakuma} .
For an integer $3 \leq m \leq 5$, an $m$-dimensional closed and connected manifold has a free fundamental group and admits a special generic map into ${\mathbb{R}}^3$ if and only if it is diffeomorphic to one of the following manifolds or represented as a connected sum of the following manifolds.
%【REVISE0】 an $m$-dimensional closed and connected manifold → a closed and connected manifold of dimension $m$ .
%【REVISE0】　is → is diffeomorphic to one of the following manifolds or .
\begin{enumerate}
\item The $m$-dimensional standard sphere.
%【REVISE0】 The $m$-dimensional standard sphere of dimension $m$ → The $m$-dimensional standard sphere　.
\item The total space of a bundle whose fiber is diffeomorphic to $S^{m-2}$ over $S^2$.
\item The total space of a bundle whose fiber is diffeomorphic to $S^{m-1}$ over $S^1$.
\end{enumerate}

For an integer $m \geq 4$, $m $-dimensional closed and simply-connected manifolds admitting special generic maps into ${\mathbb{R}}^3$ are diffeomorphic to one of the following manifolds or represented as connected sums of the following manifolds.
%【REVISE0】 m=6,7 → m \geq 4, $m$-dimensional .
%【REVISE0】　is → is diffeomorphic to one of the following manifolds or .
\begin{enumerate}
\item Homotopy spheres admitting special generic maps into ${\mathbb{R}}^3$.
\item The total space of a bundle whose fiber is diffeomorphic to a homotopy sphere of dimension $m-2$ over $S^2$.
\end{enumerate}
\end{Thm}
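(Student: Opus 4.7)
My plan is to apply the \emph{Stein factorization} of a special generic map, the standard machine in this subject. For a special generic $f\colon M^m \to \mathbb{R}^3$, decompose $f = \bar f \circ q_f$, where $q_f\colon M \to W_f$ identifies each connected component of each fiber to a point and $\bar f\colon W_f \to \mathbb{R}^3$ is an immersion of a compact orientable $3$-dimensional manifold with boundary. The source $M$ is then recovered from $W_f$ together with a linear bundle: an $S^{m-3}$-bundle over $\mathrm{Int}(W_f)$ capped by a $D^{m-2}$-bundle over $\partial W_f$, glued along their common $S^{m-3}$-boundary. This reduces the classification of possible $M$ to the classification of possible $W_f$ with compatible bundle data.

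For the forward direction, each summand type from the list admits a special generic map into $\mathbb{R}^3$ via an explicit construction: the standard sphere $S^m$ via the standard projection (with $W_f = D^3$); an $S^{m-2}$-bundle over $S^2$ via a construction with $W_f$ a tubular neighborhood of an embedded $2$-sphere in $\mathbb{R}^3$ (so $W_f \cong S^2 \times [0,1]$); and an $S^{m-1}$-bundle over $S^1$ via an analogous construction with $W_f$ a solid torus in $\mathbb{R}^3$. Connected sums are then handled by the construction cited in Example~\ref{ex:1}.

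For the backward direction, I would transfer the fundamental group hypothesis to $W_f$. Since $q_f$ has connected fibers it is surjective on $\pi_1$; for $m \geq 5$ the interior fibers $S^{m-3}$ are simply-connected, so $q_f$ is a $\pi_1$-isomorphism, while for $m = 4$ a Van Kampen argument controls the kernel (normally generated by the $S^1$-fiber class). Under the simply-connected hypothesis on $M$, one then has $\pi_1(W_f) = 1$, so each boundary component of $W_f$ is a $2$-sphere, and by the $3$-dimensional Poincar\'e conjecture $W_f$ is diffeomorphic to $S^3$ with finitely many open $3$-balls removed. Combining this with the bundle reconstruction expresses $M$ as a connected sum of homotopy spheres and homotopy $S^{m-2}$-bundles over $S^2$. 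Under the free-$\pi_1$ hypothesis for $3 \le m \le 5$, the parallel classification of compact $3$-manifolds with free $\pi_1$ (essentially boundary connected sums of $D^3$, $S^1 \times D^2$, and $S^2 \times [0,1]$) yields exactly the three listed summand types.

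The main obstacle is the passage to the \emph{smooth} classification of each summand. Even once the homotopy or homeomorphism type is pinned down, obtaining the stated diffeomorphism types requires careful tracking of the linear bundle structure groups and invocation of known classifications of homotopy spheres, including the non-existence of exotic spheres in dimensions $3$ and $5$ (cf.\ the discussion following Theorem~\ref{thm:1}) and the delicate handling of the open smooth $4$-dimensional Poincar\'e case. This is precisely why the simply-connected statement must speak of \emph{homotopy} $S^{m-2}$-bundles rather than strict $S^{m-2}$-bundles.
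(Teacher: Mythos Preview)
The paper does not give a proof of Theorem~\ref{thm:4}; it is quoted from \cite{saeki,saekisakuma} as background, with no argument supplied here. So there is no ``paper's own proof'' to compare your proposal against.

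That said, your outline is exactly the machine those references use, and it is the one the present paper encodes in Proposition~\ref{prop:1}: factor $f$ through the Reeb space $W_f$, recognize $W_f$ as a compact orientable $3$-manifold immersed in ${\mathbb{R}}^3$, transfer the $\pi_1$ hypothesis to $W_f$ (via Proposition~\ref{prop:2} in this paper), classify $W_f$ as a boundary connected sum of $D^3$, $S^1\times D^2$, and $S^2\times[0,1]$, and then rebuild $M$ from the linear $D^{m-2}$/$S^{m-3}$ bundle data over each piece. Your identification of the forward constructions (with $W_f=D^3$, $S^2\times[0,1]$, $S^1\times D^2$) matches Example~\ref{ex:1}.

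Two small cautions if you were to flesh this out. First, in the low range $m=3,4$ the generic fiber $S^{m-3}$ is $S^0$ or $S^1$, so neither connectedness nor simple-connectedness of fibers is automatic; the $\pi_1$ transfer genuinely needs the separate arguments you gesture at, and the $m=3$ case in particular does not fit your ``$q_f$ surjective on $\pi_1$ because fibers are connected'' sentence. Second, the smooth step you flag as the ``main obstacle'' is indeed where the cited papers do real work; for $3\le m\le 5$ the linearity of the bundles (Proposition~\ref{prop:1}(\ref{prop:1.1})) plus the absence of exotic spheres in the relevant dimensions is what collapses ``homotopy sphere bundle'' to ``standard sphere bundle''.
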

For this, see also \cite{saekisakuma2}.
As another study, in \cite{nishioka}, Nishioka completely solved the so-called existence problem of special generic maps into Euclidean spaces on $5$-dimensional closed and simply-connected manifolds, which are completely classified in \cite{barden}. He has shown that such a manifold admits a special generic map into ${\mathbb{R}}^n$ for some integer $1 \leq n \leq 4$ if and only if it is a standard sphere or a manifold represented as a connected sum of total spaces of bundles whose fibers are diffeomorphic to $S^3$ over $S^2$.
%【REVISE0】 We have avoided using the phrase "characterized as".
In general, it is difficult to find (families of) manifolds admitting no special generic maps of suitable classes in general. As a related small proposition, the author has proved in \cite{kitazawa4, kitazawa5} first for example, that the cup product of given suitable two cohomology classes of a manifold admitting a special generic map into the Euclidean space of a fixed dimension vanishes. 
%【REVISE0】 We use "in \cite{kitazawa4, kitazawa5} first for example" .
%【REVISE0】 product of given → cup product of given .
The present paper concerns a kind of advanced and systematic studies on cohomology rings of manifolds admitting special generic maps and ones admitting no special generic maps.
%【REVISE0】 "map" in the last → "maps" .

Throughout the present paper, as usual, the $j$-th (co)homology group of a topological space $X$ is denoted by $H_j(X;A)$ (resp. $H^j(X;A)$) and the cohomology ring of it is denoted by $H^{\ast}(X;A)$ where $A$ is the coefficient ring. In addition, the $j$-th homotopy group of $X$ is denoted by ${\pi}_k(X)$. The homomorphism between the $j$-th homology groups induced canonically from a map $i:X \rightarrow Y$ between two topological spaces is denoted by $i_{\ast}:H_j(X;A) \rightarrow H_j(Y;A)$ where $A$ is the coefficient ring. For the $j$-th homotopy groups, we use the same notation. For the $j$-th cohomology groups, we use $i^{\ast}:H^j(Y;A) \rightarrow H^j(X;A)$. The cup product of the $l>0$ elements of a family $\{a_j\}_{j=1}^l \subset H^{\ast}(X;A)$ of $l$ cohomology classes is denoted by ${\cup}_{j=1}^l a_j \in H^{\ast}(X;A)$. 

%【REVISE0】 We have added a paragraph on elementary algebraic topology.
\begin{MainThm}
\label{mthm:1}
Let $M$ be a closed manifold of dimension $m>1$ and $1 \leq n<m$ and $l>0$ be integers.
Let $A$ be a commutative ring. If there exists a sequence $\{a_j\}_{j=1}^l \subset H^{\ast}(M;A)$ of cohomology classes such that the cup product ${\cup}_{j=1}^l a_j$ does not vanish, that the degree of each class is smaller than or equal to $m-n$ and
 that the sum of the degrees are greater than or equal to $n$, then $M$ admits no special generic maps into any $n$-dimensional connected non-closed manifold $N$ with no boundary.
%【REVISE0】 We use "l" instead of "k" in this theorem.
 %【REVISE0】 We use "\cup" for cup products.
%【REVISE0】 We use "any $n$-dimensional connected non-closed manifold $N$ with no boundary".
%【REVISE0】 the product ${\cup}_{j=1}^k a_k$  → cup product ${\cup}_{j=1}^l a_j$ .
\end{MainThm}

Other papers and preprints such as \cite{kitazawa6} refer to the present paper and proofs of Main Theorem \ref{mthm:1}. These proofs are essentially same as those in the present paper and \cite{kitazawa4, kitazawa5}. 

We have the following corollary.
%【REVISE0】　We have revised after our Main Theorem 1.
\begin{MainCor}[Corollary \ref{cor:1}]
%【REVISE0】 We have added [Corollary \ref{cor:1\}].
Let $m>1$ be an integer. An $m$-dimensioal manifold whose cohomology ring is isomorphic to that of the $m$-dimensional real projective space admits no special generic map into any $n$-dimensional non-closed connected manifold $N$ with no boundary for $1 \leq n \leq m-1$ where the coefficient ring is $\mathbb{Z}/2\mathbb{Z}$, which is of order $2$.
%【REVISE0】 an → any .
%【REVISE0】 We have added a phrase ", which is of order $2$" in the end of the last sentence .
\end{MainCor}
The definition of a {\it c-symplectic} manifold is in the last section and we present another corollary.
%【REVISE0】 We deleted "explained".
\begin{MainCor}[Corollary \ref{cor:2}]
%【REVISE0】 We have added [Corollary \ref{cor:2}].
A closed c-symplectic manifold $M$ of dimension $m=2k>0$ admits no special generic maps into any $n$-dimensional connected non-closed manifold $N$ with no boundary satisfying $n<2k-1$.
%【REVISE0】 We use "any $n$-dimensional connected non-closed manifold $N$ with no boundary satisfying $n<2k-1$".
\end{MainCor}

Furthermore, as a related study, in \cite{kitazawa5}, the author has proved the vanishing of the {\it triple Massey product} for a triplet of cohomology classes of a manifold admitting a special generic map of a suitable class for which we can define the triple Massey product. However, we do not discuss this in the present paper. See \cite{kraines} for {\it Massey products} for example.

Another main theorem is as follows.
%【REVISE0】 We use "as follows" . 
 We present undefined several terminologies and notions in the last section. This is a theorem on Euclidean spaces into which given manifolds admit special generic maps. The condition ${\rm Sp}_{\geq n_0}$ implies the existence of special generic maps into ${\mathbb{R}}^{n}$ for $n_0 \leq n \leq m$ where $m$ and $n_0$ are the dimension of the manifold and a positive integer, respectively. The condition ${\rm CohP}_{A,m,n_0-1}$ implies the non-existence of special generic maps into ${\mathbb{R}}^{n}$ for $1 \leq n<n_0$ where $m$ and $n_0$ are as just before.
 %【REVISE0】 We have inserted a short exposition on the conditions with two sentences.  
\begin{MainThm}
\label{mthm:2}
Let $A$ be a principal ideal domain {\rm (}having a unique identity element which is not the zero element{\rm )}. 
\begin{enumerate}
\item \label{mthm:2.1}
Let $M_1$ and $M_2$ be closed and connected manifolds of dimension $m>2$. Let $n_0$ be an integer greater than $1$ and smaller than $m$.
%【REVISE0】 larger → greater .
If $M_1$ and $M_2$ satisfy the conditions ${\rm Sp}_{\geq n_0}$ and ${\rm CohP}_{A,m,n_0-1}$, then a manifold represented as a connected sum of these manifolds also does.
\item \label{mthm:2.2}
 Let $M^{\prime}$ be a closed and connected manifold of dimension $m^{\prime}>2$ and for an integer ${n_0}^{\prime}$ greater than $1$ and smaller than $m^{\prime}$, let $M^{\prime}$ satisfy the conditions ${\rm Sp}_{\geq {n_0}^{\prime}}$ and ${\rm CohP}_{A,m^{\prime},{n_0}^{\prime}-1}$.
 %【REVISE0】 larger → greater .
  We also assume that the homology group of $M^{\prime}$ is free where the coefficient ring is $A$.
  %【REVISE0】  whose coefficient is $A$ is free → is free where the coefficient ring is $A$.
 Let $n_0$ be a positive integer larger than ${n_0}^{\prime}$ and $F$ be a closed and connected manifold of dimension $n_0-{n_0}^{\prime}$ satisfying the following properties.
\begin{enumerate}
 \item
 \label{mthm:2.2.1}
  There exist a positive integer $l$ and a sequence $\{a_j\}_{j=1}^l \subset H^{\ast}(F;A)$ of cohomology classes such that the cup product ${\cup}_{j=1}^l a_j$ does not vanish, that the degree of each class is smaller than or equal to $m^{\prime}-{n_0}^{\prime}+1$ and that the sum of the degrees is equal to $n_0-{n_0}^{\prime}$.
 %【REVISE0】 product ${\prod}_{j=1}^k a_k$ → cup product ${\cup}_{j=1}^k a_j$ . 
 \item
  \label{mthm:2.2.2}
  The homology group of $F$ is free where the coefficient ring is $A$.
  %【REVISE0】  whose coefficient is $A$ is free → is free where the coefficient ring is $A$.
 \item 
  \label{mthm:2.2.3}
 We have an immersion of $F$ into ${\mathbb{R}}^{n_0}$ whose normal bundle is trivial.
 %【REVISE0】 We change the sentence here. 
 %【REVISE0】 We use "l"  instesad of "l" in this theorem. 

\end{enumerate}
Let $m:=m^{\prime}+n_0-{n_0}^{\prime}$.
Then $M^{\prime} \times F$ satisfies the conditions ${\rm CohP}_{A,m,n_0-1}$ and ${\rm Sp}_{\geq n_0}$.
%【REVISE0】 In this situation, → Then
%【REVISE0】 $m$ is introduced as $m:=m^{\prime}+n_0-{n_0}^{\prime}$.
\end{enumerate}
\end{MainThm}

\subsection{The content of the present paper.}
\label{subsec:1.3}

The organization of the paper is as follows.
%【REVISE0】 the following → follows .
We review structures of special generic maps. In explaining about them, we use the {\it Reeb space} of a special generic map, which is defined as the space of all connected components of preimages.
%【REVISE】 explaining → explaining about .
 This is regarded as a compact manifold whose dimension is equal to that of the manifold of the target and which we can immerse there.
%【REVISE】 that of target space → that of the manifold of the target .
%【REVISE】 immerse ～ →  immerse there.
 We also present Example \ref{ex:1} as simplest examples of special generic maps.
The last section is devoted to the main ingredient including Main Theorems.

\section{Structures of special generic maps and a simplest example.}
\label{sec:2}
For a continuous map $c:X \rightarrow Y$ between topological spaces, we can define an equivalence relation ${\sim}_c$ on $X$ by the following rule: $p_1 {\sim}_c p_2$ if and only if $p_1$ and $p_2$ are in a same connected component of a preimage $c^{-1}(q)$ ($q \in Y$). We call the quotient space $W_c:=X/{{\sim}_c}$ the {\it Reeb space} of $c$. Let the quotient map be denoted by $q_c:X \rightarrow W_c$. We can define in a unique way a map $\bar{c}$ satisfying $f=\bar{c} \circ q_c$.
%【REVISE0】 We denote the quotient map by → Let the quotient map be denoted by .
See also \cite{reeb} (as a classical and important study) for example. In general Reeb spaces are fundamental and strong tools in investigating manifolds via Morse functions, fold maps and more general good smooth maps.

\begin{Prop}[\cite{saeki} for example]
\label{prop:1}
\begin{enumerate}
\item
\label{prop:1.1}
 The Reeb space of a special generic map $f:M \rightarrow N$ from an $m$-dimensional closed and connected manifold $M$ into an $n$-dimensional manifold $N$ with no boundary satisfying $m>n$ is an $n$-dimensional manifold immersed into $N$ via $\bar{f}:W_f \rightarrow N$ and $q_f(S(f))=\partial W_f$ holds.
 %【REVISE0】 a closed and connected manifold $M$ of dimension $m$ .
 %【REVISE0】 a smooth manifold of dimension $n$ →  an $n$-dimensional manifold.
  Furthermore, around the boundary $\partial W_f$, the composition of the restriction of $q_f$ to the preimage of a small collar neighborhood with the canonical projection to the boundary $\partial W_f$ gives a linear bundle whose fiber is diffeomorphic to $D^{m-n+1}$.
 %【REVISE0】 a canonical → the canoncial . 
   Moreover, the composition of the restriction of $q_f$ to the preimage of the complementary set of the interior of the small collar neighborhood gives a smooth bundle whose fiber is diffeomorphic to $S^{m-n}${\rm :} the bundle is linear in the case $m-n=1,2,3$ for example. Furthermore, in the PL or piecewise smooth category, we can take a compact manifold $W$ bounded by $M$ and collapsing to $W_f$ and  in the case $m-n=1,2,3$ for example, we can take $W$ as a smooth manifold. 
%【REVISE0】 : → {\rm :} .
\item
\label{prop:1.2}
 For any immersion ${\bar{f}}_N$ of a compact and connected manifold $\bar{N}$ of dimension $n>0$ into an $n$-dimensional manifold $N$ with no boundary and any integer $m>n$, there exist an $m$-dimensional closed and connected manifold $M$ of dimension $m$ and a special generic map $f:M \rightarrow N$ satisfying $W_f=\bar{N}$ and $\bar{f}={\bar{f}}_N$. If $N$ is orientable, then $M$ can be taken as an orientable manifold.
%【REVISE0】 smooth immersion → immersion .
%【REVISE0】  a closed and connected manifold $M$ of dimension $m$ → an $m$-dimensional closed and connected manifold $M$ .
\end{enumerate}
\end{Prop}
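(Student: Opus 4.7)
For part (\ref{prop:1.1}), the plan is local analysis near the singular set and Ehresmann elsewhere. Around a singular point, the normal form $(x_1,\dots,x_m)\mapsto(x_1,\dots,x_{n-1},\sum_{k=n}^m x_k^{\,2})$ shows that $S(f)$ is locally $\{x_n=\cdots=x_m=0\}$, an $(n-1)$-dimensional submanifold on which $f$ is an immersion, and it exhibits a tubular neighborhood of $S(f)$ as the total space of a linear $D^{m-n+1}$-disk bundle whose $O(m-n+1)$-orbits are exactly the Reeb equivalence classes. This identifies $\partial W_f$ with $q_f(S(f))$ and realizes the collar together with its linear $D^{m-n+1}$-bundle structure. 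Outside a tubular neighborhood of $S(f)$, $f$ is a proper submersion, so Ehresmann's theorem yields a smooth bundle with closed connected $(m-n)$-dimensional fibers. Tracing the fiber continuously into the collar, where it becomes the boundary $S^{m-n}$ of a disk-bundle fiber, identifies the generic fiber as $S^{m-n}$, and gluing the two bundle pieces exhibits $W_f$ as an $n$-manifold immersed into $N$ by $\bar f$. A compact $W$ with $\partial W=M$ collapsing onto $W_f$ is then obtained by filling each $S^{m-n}$-fiber with $D^{m-n+1}$; for $m-n\le 3$ the inclusion $O(m-n+1)\hookrightarrow \mathrm{Diff}(S^{m-n})$ is a homotopy equivalence, so the sphere bundle is smoothly linear and the cap-off is smooth.

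For part (\ref{prop:1.2}), the plan is to realize the above structure by an explicit gluing. Choose a collar $\partial \bar N \times [0,\epsilon)$ in $\bar N$ and split $\bar N$ along $\partial \bar N \times \{\epsilon/2\}$. Over $\bar N \setminus (\partial \bar N \times [0,\epsilon/2))$ take the trivial $S^{m-n}$-bundle $E_1$, over $\partial \bar N$ take the trivial $D^{m-n+1}$-bundle $E_2=\partial \bar N \times D^{m-n+1}$, and glue $\partial E_1$ to $\partial E_2=\partial \bar N \times S^{m-n}$ by the identity, producing a closed $m$-manifold $M$. Define $f$ on $E_1$ by $(p,v)\mapsto \bar f_N(p)$ and on $E_2$, using collar coordinates in which $\bar f_N$ reads $(p,t)\mapsto(\phi(p),t)$, by the quadratic formula
\[ (p,y)\mapsto \bar f_N\bigl(p,\tfrac{\epsilon}{2}|y|^{2}\bigr). \]
At each singular point $(p,0)\in \partial \bar N \times \{0\}$ this becomes $(x_1,\dots,x_{n-1},y_1,\dots,y_{m-n+1})\mapsto(x_1,\dots,x_{n-1},\tfrac{\epsilon}{2}\sum_i y_i^{\,2})$, the special generic normal form after rescaling the $y_i$, while the matching with $E_1$ along $|y|=1$ is immediate. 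By construction $W_f=\bar N$ and $\bar f=\bar f_N$. If $N$ is orientable, the trivial bundles $E_1$ and $E_2$ are orientable, and so is $M$.

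The main obstacle in part (\ref{prop:1.1}) is identifying the generic fiber of $q_f$ as $S^{m-n}$ rather than a general closed $(m-n)$-manifold; I expect this to follow from continuity of the fiber into the collar, where it is forced by the disk-bundle structure to be a sphere. In part (\ref{prop:1.2}) the principal subtlety is writing $f$ on the disk-bundle piece so that it simultaneously matches the submersion piece along the gluing and realizes the special generic normal form at every singular point; both are secured by the explicit quadratic formula above.
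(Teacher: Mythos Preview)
The paper does not give its own proof of this proposition; it is quoted from \cite{saeki} without argument, so there is nothing in the text to compare your plan against directly. Your outline is essentially the standard route taken in Saeki's work: local normal form near $S(f)$ to produce the collar disk bundle and identify $\partial W_f$, Ehresmann on the complement to get an $S^{m-n}$-bundle (connectedness of the fibers of $q_f$ being built into the Reeb quotient, and the sphere identification coming by continuity into the collar), and then capping each sphere fiber by a $D^{m-n+1}$, with linearity of the $S^{m-n}$-bundle for $m-n\le 3$ supplied by the Smale and Hatcher theorems on $\mathrm{Diff}(S^{m-n})$. For part~(\ref{prop:1.2}) your doubling-with-trivial-bundles construction is likewise the standard one.

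Two places could use one more word. In part~(\ref{prop:1.1}), the \emph{linearity} of the disk bundle over $\partial W_f$ does not follow from the pointwise normal form alone; it uses that the transverse Hessian of $f$ along $S(f)$ is positive definite and hence endows the normal bundle of $S(f)$ with a canonical Euclidean structure, giving the $O(m-n+1)$-reduction. In part~(\ref{prop:1.2}), the map you write is only $C^0$ across the gluing locus $|y|=1$: the radial derivative of $\tfrac{\epsilon}{2}|y|^{2}$ at $|y|=1$ equals $\epsilon$, whereas on the $E_1$ side the collar parameter moves with unit speed, so a smooth monotone reparametrization near $|y|=1$ (or a bump function) is needed to make $f$ genuinely smooth. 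Both are routine fixes and do not affect the plan.
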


\begin{Ex}
\label{ex:1}
Let $M$ be a closed manifold of dimension $m>1$ represented as a connected sum of all manifolds in the family $\{S^{k_j} \times S^{m-k_j}\}$ of finitely many manifolds satisfying $1 \leq k_j <n$ where $1<n \leq m$ holds.
%【REVISE0】 $1<n \leq m$ → $1<n<m$ .
 This admits a special generic map into ${\mathbb{R}}^n$ such that the restriction to the singular set is an embedding and that the image is represented as a boundary connected sum of all manifolds in the family $\{S^{k_j} \times D^{n-k_j}\}$. Furthermore, the singular value set and the boundary of the image agree, and the preimage of each regular value in the image is diffeomorphic to $S^{m-n}$.
\end{Ex}
\section{The main theorems, their proofs and applications.}
The following proposition or essentially (almost) equivalent ones are shown in \cite{saeki} and \cite{kitazawa,kitazawa2,kitazawa3,kitazawa4,kitazawa5} for example.
%【REVISE0】 We use "\cite{saeki} and \cite{kitazawa,kitazawa2,kitazawa3,kitazawa4,kitazawa5} for example"
\begin{Prop}
\label{prop:2}
In the situation of Proposition \ref{prop:1} {\rm (}\ref{prop:1.1}{\rm )} {\rm (}and {\rm (}\ref{prop:1.2}{\rm )}{\rm )}, assume that $N$ is connected and non-closed and let the inclusion map denoted by $i:M \rightarrow W$.
%【REVISE0】 open non-closed .
%【REVISE0】 us denote the inclusion map by → the inclusion map denoted by .
 For a suitable PL or piecewise smooth map giving a collapsing $r:W \rightarrow W_f$, $q_f=r \circ i$ holds. 
Let $A$ be a commutative group. Here, the induced morphisms $i_{\ast}:H_j(M;A) \rightarrow H_j(W_f;A)$, $i^{\ast}:H^j(W_f;A) \rightarrow H^j(M;A)$ and $i_{\ast}:{\pi}_j(M) \rightarrow {\pi}_j(W_f)$ are isomorphisms for $0 \leq j \leq m-n$. 
\end{Prop}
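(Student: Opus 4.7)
The plan is as follows. Because $W$ collapses to $W_f$, the map $r$ is a (simple) homotopy equivalence, so the factorization $q_f = r \circ i$ lets me reduce all three claims to showing that the inclusion $i : M \hookrightarrow W$ induces isomorphisms on $\pi_j$, $H_j$, $H^j$ for $0 \le j \le m-n$. A crucial preliminary observation is that the hypothesis ``$N$ connected and non-closed'' forces $\partial W_f \neq \emptyset$: otherwise, the immersion $\bar{f} : W_f \to N$ of the compact connected $n$-manifold $W_f$ (without boundary) into the connected $n$-manifold $N$ would have clopen image, hence $\bar{f}(W_f) = N$, contradicting the non-closedness of $N$.

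For the (co)homology isomorphisms I would apply Poincar\'e--Lefschetz duality to the compact PL $(m+1)$-manifold $W$ with boundary $M$, obtaining $H^j(W,M;A) \cong H_{m+1-j}(W; A \otimes \mathcal{O}_W)$, where $\mathcal{O}_W$ is the orientation local system (trivial in the orientable case). Since $W$ is homotopy equivalent to $W_f$, and $W_f$ is an $n$-dimensional compact connected manifold with non-empty boundary, one checks that $H_k(W_f;L) = 0$ for every $k \ge n$ and every local coefficient system $L$: the case $k > n$ is by dimension, and for $k = n$ one dualizes again on $W_f$ to reduce to $H^0(W_f, \partial W_f; L') = 0$, which holds because any locally constant section of $L'$ on the connected manifold $W_f$ vanishing on the non-empty set $\partial W_f$ must vanish identically. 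Consequently $H^j(W,M;A) = 0$ for all $j \le m-n+1$, and the long exact sequence of the pair $(W,M)$ immediately yields the required isomorphism $i^{\ast}: H^j(W;A) \to H^j(M;A)$ for $0 \le j \le m-n$; the homology statement is entirely analogous.

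For the homotopy groups the aim is to show $\pi_j(W,M) = 0$ for $0 \le j \le m-n+1$, so that the long exact sequence of homotopy of the pair yields the desired isomorphisms. I expect the main obstacle to be the $\pi_1$ step: showing $i_{\ast}: \pi_1(M) \to \pi_1(W)$ is an isomorphism. For this I would apply van Kampen's theorem to the decomposition $M = M_1 \cup M_2$ provided by Proposition \ref{prop:1}, where $M_1 = q_f^{-1}(C)$ is a $D^{m-n+1}$-bundle over $\partial W_f$ (so $M_1 \simeq \partial W_f$), $M_2 = q_f^{-1}(W_f \setminus \operatorname{int}(C))$ is an $S^{m-n}$-bundle over $W_f \setminus \operatorname{int}(C)$ (a space homotopy equivalent to $W_f$), and $M_1 \cap M_2$ is an $S^{m-n}$-bundle over $\partial W_f$. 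For $m-n \ge 2$ the fiber $S^{m-n}$ is simply connected and van Kampen gives $\pi_1(M) \cong \pi_1(W_f) \cong \pi_1(W)$ directly. In the borderline case $m-n = 1$, the $S^1$-fiber of $M_1 \cap M_2$ bounds the $D^2$-fiber of $M_1$, so its class dies in $\pi_1(M_1)$; the amalgamated product then identifies the corresponding fiber class in $\pi_1(M_2)$ with zero, and one again recovers $\pi_1(M) \cong \pi_1(W_f) \cong \pi_1(W)$. Once $\pi_1$ is handled, running the same Poincar\'e--Lefschetz argument on the universal cover of $W$ (whose spine is still at most $n$-dimensional) yields $H_k(\widetilde{W}, \widetilde{M}; \mathbb{Z}) = 0$ for $k \le m-n+1$, and the relative Hurewicz theorem then gives the required vanishing $\pi_k(W,M) = 0$ in the same range, completing the proof.
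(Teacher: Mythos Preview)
Your argument is correct, but the paper takes a shorter and more unified route. The paper's key observation is that since $N$ is connected and non-closed, the immersed compact $n$-manifold $W_f$ collapses to an $(n-1)$-dimensional polyhedron; because $W$ collapses to $W_f$, turning this CW structure upside down shows that $W$ is obtained from a collar $M\times[0,1]$ by attaching handles of index at least $(m+1)-(n-1)=m-n+2$. This single handle-theoretic remark settles the homotopy, homology, and cohomology statements simultaneously, since attaching a $k$-cell leaves $\pi_j$, $H_j$ and $H^j$ unchanged for $j\le k-2$. Your route via Poincar\'e--Lefschetz duality for the (co)homology and van Kampen plus relative Hurewicz on the universal cover for the homotopy groups reaches the same conclusion but is longer and treats the three claims separately. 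One small caveat: on the universal cover $\widetilde{W}$, which need not be compact, your Poincar\'e--Lefschetz step does not apply as stated; the clean fix is exactly the paper's argument---the handle decomposition of $W$ relative to $M$ lifts, so $(\widetilde{W},\widetilde{M})$ still has only relative cells of dimension $\ge m-n+2$, whence $H_k(\widetilde{W},\widetilde{M})=0$ in the required range.
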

We explain about the main ingredient of the proof of the statement on the isomorphisms only.
%【REVISE0】 We have added "about" after explain and moved only to the end of the sentence . 
\begin{proof}[The main ingredient of the proof of the statement on the isomorphisms.]
$W_f$ collapses to an ($n-1$)-dimensional polyhedron since it can be immersed into an $n$-dimensional connected and non-closed manifold $N$ and $W$ is obtained by attaching handles whose indices are greater than $m-n+1$ to $M \times \{1\} \subset M \times [0,1]$: we identify $M$ and $M \times \{0\}$ via the map $i_M(x):=(x,0)$ and $M \times [0,1]$ is regarded as a small collar neighborhood (in the category where we discuss).
%【REVISE0】 We have inserted "since it can be immersed into an $n$-dimensional connected and non-closed manifold $N$" after "an ($n-1$)-dimensional polyhedron" .
%【REVISE0】 m-m+1 → m-n+1 .
%【REVISE0】 larger → greater . 
\end{proof}

%\begin{Thm}
%\label{thm:5}
%Let $M$ be a closed manifold of dimension $m>1$ and $1 \leq n<m$ and $k>0$ be integers.
%Let $A$ be a principal ideal domain {\rm (}having a unique identity element which is not the zero element{\rm )}. 
%If there exists a sequence $\{a_j\}_{j=1}^k \subset H^{\ast}(M;A)$ of cohomology classes such that the product ${\prod}_{j=1}^k a_k$ does not vanish, that the degree of each class is smaller than or equal to $m-n$ and
% that the sum of the degrees is greater than or equal to $n$, then $M$ admits no special generic map into a connected open manifold $N$ of dimension $n$.

%\end{Thm}
\begin{proof}[Proof of Main Theorem \ref{mthm:1}]
Suppose that $M$ admits a special generic map $f:M \rightarrow N$. We abuse the notation in Proposition \ref{prop:2} and apply this proposition.
%【REVISE0】 notation → the notation .
There exists a unique sequence $\{b_j\}_{j=1}^l \subset H^{\ast}(W;A)$ of cohomology classes satisfying $a_j= i^{\ast}(b_j)$. We have ${\cup}_{j=1}^l a_j={\cup}_{j=1}^l i^{\ast}(b_j)= i^{\ast}({\cup}_{j=1}^l b_j)$
%【REVISE0】 \prod → \cup . 
%${\cup}_{j=1}^k a_k={\cup}_{j=1}^k i^{\ast}(b_k)= i^{\ast}({\cup}_{j=1}^k b_k)$ → ${\cup}_{j=1}^l a_j={\cup}_{j=1}^l i^{\ast}(b_j)= i^{\ast}({\cup}_{j=1}^l b_j)$ .
 and this is zero since $W$ collapses
 to an ($n-1$)-dimensional polyhedron. This contradicts the assumption. This completes the proof. 
\end{proof}

We present important examples related to the present problem.
%【REVISE0】 Before "important" start by "We present" .
\begin{Ex}
\label{ex:2}
Let $m>1$ be an integer. The $m$-dimensional torus admits no special generic maps into any $n$-dimensional connected non-closed manifold $N$ with no boundary for $1 \leq n \leq m-1$.
%【Revise0】 an → any .
\cite{grossbergkarshon, masuda} are studies on manifolds obtained by considering finite iterations of constructing bundles whose fibers are the circle starting from the circle and some of manifolds of this class satisfy the assumption of our Main Theorem \ref{mthm:1} for arbitrary $1 \leq n \leq m-1$.
%【REVISE0】 \cite{grossbergkarshon} and \cite{masuda} → \cite{grossbergkarshon, masuda} .
%【REVISE0】 this class of manifolds → manifolds of this class .
%【REVISE0】 Theorem \ref{thm:5} → our Main Theorem \ref{mthm:1} .
%【REVISE0】 We have added "for arbitrary $1 \leq n \leq m-1$" in the end of the sentence.
\end{Ex}
\begin{Rem}
By virtue of known algebraic topological and differential topological theory of special generic maps in the introduction and the previous sections and Proposition \ref{prop:2} for example,
%【REVISE0】 We use "and Proposition \ref{prop:2} for example" .
 $S^1 \times S^1 \times S^1 \times S^1$ admits no special generic maps into ${\mathbb{R}}^3$. If it admits one, then the fundamental group of the Reeb space, which is diffeomorphic to a $3$-dimensional compact, connected and orientable manifold by Proposition \ref{prop:1}, and that of $S^1 \times S^1 \times S^1 \times S^1$ agree: however, this group is known to be never isomorphic to any fundamental group of any $3$-dimensional compact, connected and orientable manifold. 
Main Theorem \ref{mthm:1} produces another exposition on this.
%【REVISE0】 We use "Main Theorem \ref{mthm:1} produces another exposition on this" .
\end{Rem}

A $2k$-dimensional closed manifold $X$ where $k>0$ is an integer is said to be {\it c-symplectic} if there exists a cohomology class $c_s \in H^2(X;\mathbb{R})$ such that the cup product ${\cup}_{j=1}^k c_s$, the $k$-th power of $c_s$, is not zero. See also \cite{kasuya, luptonoprea} for example.
%【REVISE0】 We revise some of these two sentences.
%【REVISE0】 We have added ", the $k$-th power of $c_s$," .
%【REVISE0】 \cite{kasuya} and \cite{luptonoprea} → \cite{kasuya,luptonoprea} .
\begin{Cor}
\label{cor:1}
A closed c-symplectic manifold $M$ of dimension $m=2k>0$ admits no special generic maps into any connected non-closed manifold $N$ of dimension $n<2k-1$ with no boundary.
%【REVISE0】 map → maps .
%【REVISE0】 a connected open → any connected non-closed .
%【REVISE0】 We have added "with no boundary" in the end of the sentence .
\end{Cor}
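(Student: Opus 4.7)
The plan is to derive this corollary directly from Main Theorem \ref{mthm:1}, using the distinguished cohomology class provided by the c-symplectic structure as each factor in a cup product of length $k$.

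Concretely, I would take the coefficient ring to be $A = \mathbb{R}$, set $l := k$, and choose $a_j := c_s \in H^2(M;\mathbb{R})$ for every $1 \leq j \leq k$, where $c_s$ is the c-symplectic class with ${\cup}_{j=1}^k c_s = {c_s}^k \neq 0$. The non-vanishing of this $k$-fold cup product is precisely the defining property of a c-symplectic manifold, so the first hypothesis of Main Theorem \ref{mthm:1} is immediate. Each $a_j$ has degree $2$, and the assumption $n < 2k-1$ forces $n \leq 2k-2$, hence $m - n = 2k - n \geq 2$, so each degree is at most $m-n$ as required. The sum of the degrees equals $2k = m$, which is strictly greater than $n$, so the lower bound $\sum \deg(a_j) \geq n$ holds as well. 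Applying Main Theorem \ref{mthm:1} gives the conclusion.

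There is no real obstacle here, since the corollary is essentially a repackaging of Main Theorem \ref{mthm:1} in the language of c-symplectic classes; the substantive content lies in Proposition \ref{prop:2} and the collapsing argument already used in the proof of Main Theorem \ref{mthm:1}. The only minor point worth mentioning is that one implicitly uses $n \geq 1$, which is built into the setup of special generic maps, and that the degree bound would fail in the borderline case $n = 2k - 1$ (where $m-n = 1$ and one cannot fit classes of degree $2$); this explains the sharpness of the hypothesis $n < 2k-1$ in the statement.
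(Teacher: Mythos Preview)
Your proposal is correct and matches the paper's approach: the corollary is stated immediately after the definition of a c-symplectic manifold with no explicit proof, so it is meant to follow directly from Main Theorem~\ref{mthm:1} by taking $A=\mathbb{R}$, $l=k$, and $a_j=c_s$ for all $j$, exactly as you do. Your verification of the degree conditions is accurate, including the observation about why $n=2k-1$ is excluded.
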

For example, \cite{choimasudasuh,grossbergkarshon} are on algebraic topological and differential topological studies on some classes of (c-)symplectic closed manifolds.
%【REVISE0】 For example, \cite{choimasudasuh} and \cite{grossbergkarshon}, for example, → \cite{choimasudasuh,grossbergkarshon} .
For example, $S^2 \times S^2$ is regarded as a symplectic manifold and admits a special generic map into ${\mathbb{R}}^3$. On the other hand, the $2k$-dimensional torus is regarded as a symplectic manifold and admits no special generic maps into any $n$-dimensional connected non-closed manifold $N$ for $1 \leq n \leq 2k-1$ by Example \ref{ex:2}.
%【REVISE0】 special generic map → special generic maps .
%【REVISE0】 connected open manifold $N$ of dimension $n$ → $n$-dimensional connected non-closed manifold $N$ .
We also show another example. 
\begin{Ex}
\label{ex:3}
	Let $k>0$ be an integer. $S^k \times S^k \times S^k$ admits no special generic maps into any $2k$-dimensional connected non-closed manifold $N$ by Main Theorem \ref{mthm:1}. A manifold represented as a connected sum of three copies of $S^k \times S^{2k}$ admits a special generic map into ${\mathbb{R}}^{2k}$ by Example \ref{ex:1}. For these two, the $j$-th homology groups are as follows where the coefficient ring is $\mathbb{Z}$.
	\begin{itemize}
		\item They are isomorphic to $\mathbb{Z}$ for $j=0,3k$.
		\item They are isomorphic to $\mathbb{Z} \oplus \mathbb{Z} \oplus \mathbb{Z}$ for $j=k,2k$.
		\item They are trivial for $j \neq 0,k,2k,3k$.
	\end{itemize}
\end{Ex}
This shows explicitly that difference in cohomology rings affect the existence or non-existence of special generic maps.
%【REVISE0】 We have added an example.
\begin{Cor}
\label{cor:2}
Let $m>1$ be an integer. An $m$-dimensioal manifold $M$ whose cohomology ring is isomorphic to that of the $m$-dimensional real projective space admits no special generic maps into any $n$-dimensional connected non-closed manifold $N$ with no boundary
%【REVISE0】 We use "any $n$-dimensional connected non-closed manifold $N$ with no boundary".
 for $1 \leq n \leq m-1$ where the coefficient ring is $\mathbb{Z}/2\mathbb{Z}$, the group of order $2$.
 %【REVISE0】 We have added ", the group of order $2$".
\end{Cor}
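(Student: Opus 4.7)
The plan is to verify the hypotheses of Main Theorem \ref{mthm:1} with the coefficient ring $A=\mathbb{Z}/2\mathbb{Z}$ and then quote that theorem directly. First I would recall that the mod-$2$ cohomology ring of $\mathbb{RP}^m$ is the truncated polynomial ring $\mathbb{Z}/2\mathbb{Z}[x]/(x^{m+1})$ with $x$ of degree $1$. Thus by the hypothesis on $M$, the ring $H^{\ast}(M;\mathbb{Z}/2\mathbb{Z})$ contains a degree-one class $x$ whose $j$-th cup-power $x^{j}$ is nonzero for every $0\le j\le m$.

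Next, for each $n$ with $1\le n\le m-1$, I would take $l:=n$ and set $a_1=a_2=\cdots=a_n:=x$. Each $a_j$ then has degree $1$, and the required inequality $1\le m-n$ is precisely the assumption $n\le m-1$. The sum of the degrees of the $a_j$ equals $n$, matching the lower bound required on the total degree. Finally, the cup product $\cup_{j=1}^{n}a_j=x^{n}$ is nonzero in $H^{n}(M;\mathbb{Z}/2\mathbb{Z})$ because $n\le m-1<m+1$, so $x^{n}$ survives in the truncated polynomial ring. All three hypotheses of Main Theorem \ref{mthm:1} are therefore satisfied, and that theorem immediately gives the claimed non-existence of a special generic map $f:M\to N$ for any $n$-dimensional connected non-closed manifold $N$ with no boundary.

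I do not expect any genuine obstacle: the corollary is a direct specialization of Main Theorem \ref{mthm:1} to the explicit mod-$2$ cohomology ring of real projective space. The only conceptual point worth flagging in the writeup is that the range $1\le n\le m-1$ is exactly the range in which one can simultaneously arrange $\deg x\le m-n$ and $x^{n}\ne 0$; outside this range one of the two conditions fails, which is why no statement is claimed there.
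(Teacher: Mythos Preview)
Your proposal is correct and follows essentially the same approach as the paper: both take the degree-one generator of $H^{\ast}(M;\mathbb{Z}/2\mathbb{Z})\cong \mathbb{Z}/2\mathbb{Z}[x]/(x^{m+1})$ and apply Main Theorem~\ref{mthm:1} directly. The only cosmetic difference is that the paper uses a single sequence of length $l=m$ (with cup product $x^{m}\ne 0$) that works for every $n$ at once, whereas you choose $l=n$ depending on $n$; both choices satisfy the hypotheses equally well.
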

\begin{proof}
There exists a cohomology class $u_0 \in H^{1}(M;\mathbb{Z}/2\mathbb{Z})$ such that the cup product ${\cup}_{j=1}^m u_0$, the $m$-th power of $u_0$, is not zero.
%【REVISE0】 We have added ", the $m$-th power of $u_0$," here.
 Main Theorem \ref{mthm:1} completes the proof.
\end{proof}
A weaker result with its proof in a different way is announced in \cite{wrazidlo2}. This is on manifolds homotopy equivalent to the $7$-dimensional real projective plane. 
%【REVISE0】 projectiive → projective .
After the first announcement of an earlier version of the present paper on https://arxiv.org/abs/2008.04226, a stronger result \cite{wrazidlo3} was announced. This says that a $7$-dimensional closed and connected manifold whose homology group is isomorphic to that of the $7$-dimensional real projective space admits no special generic maps into ${\mathbb{R}}^n$ for $n=1,2,3,4,5,6$ where the coefficient ring is the integer ring $\mathbb{Z}$.  
%【REVISE0】 A paragraph consisting of two sentences has been added .
\begin{Def} 
\label{def:1}
Let $M$ be a closed manifold of dimension $m>1$. Let $n_0$ be a positive integer smaller than $m$.
$M$ is said to {\it satisfy the condition} ${\rm Sp}_{\geq n_0}$ if $M$ admits a special generic map into ${\mathbb{R}}^n$ for any integer $n_0 \leq n <m$.

\end{Def}
\begin{Ex}
\label{ex:4}
$M$ in Example \ref{ex:1} satisfies the condition ${\rm Sp}_{\geq \min\{k_j+1\}_{j \in J}}$ where $J$ denotes a finite set and is not empty in the situation of Example \ref{ex:1}.
\end{Ex}
The following notion is introduced motivated by Main Theorem \ref{mthm:1}.
\begin{Def}
\label{def:2}
%Last we present the following theorem, related to Theorem \ref{thm:4} or Main Theorem \ref{mainthm:1}.

Let $X$ be a closed and connected manifold of dimension $\dim X>1$ and $n_0$ be a non-negative integer smaller than $\dim X$.
Let $A$ be a principal ideal domain {\rm (}having a unique identity element which is not the zero element{\rm )}. 
If there exist a positive integer $l>0$ and a sequence $\{a_j\}_{j=1}^l \subset H^{\ast}(X;A)$ of cohomology classes such that the cup product ${\cup}_{j=1}^l a_j$ does not vanish, that the degree of each class is smaller than or equal to $\dim X-n_0$ and
 that the sum of the degrees are greater than or equal to $n_0$, then $X$ is said to {\it satisfy the condition} ${\rm CohP}_{A,\dim X,n_0}$.
 \end{Def}
\begin{proof}[Proof of Main Theorem \ref{mthm:2}]
We prove (\ref{mthm:2.1}).
%【REVISE0】 the the first statement → (\ref{mthm:2.1}) .
 The condition ${\rm CohP}_{A,m,n_0-1}$ follows immediately. We can construct a special generic map on a closed manifold represented as a connected sum of closed and connected manifolds admitting special generic maps into ${\mathbb{R}}^n$ easily (for any $n$ satisfying $1 \leq n \leq m$). This is a fundamental argument in \cite{saeki} for example.
%【REVISE0】 and so on → for example.
This completes the proof of (\ref{mthm:2.1}).
%【REVISE0】 the the first statement → (\ref{mthm:2.1}) .

We show (\ref{mthm:2.2}).
%【REVISE0】 the the first statement → (\ref{mthm:2.2}) .
By virtue of the first two conditions (\ref{mthm:2.2.1}) and (\ref{mthm:2.2.2}) and the condition on the homology group of $M^{\prime}$, by choosing suitable sequences of cohomology classes of $M^{\prime}$ and $F$, we can obtain a sequence of cohomology classes of $M^{\prime} \times F$ of a finite length where the coefficient ring is $A$ such that the degree of each class is smaller than or equal to $m^{\prime}-{n_0}^{\prime}+1$ and that the sum of the degrees are greater than or equal to $n_0-{n_0}^{\prime} +{n_0}^{\prime}-1=n_0-1$. This completes the proof on the condition
${\rm CohP}_{A,m^{\prime}+n_0-{n_0}^{\prime},n_0-1}$.
%【REVISE0】 By choosing → By virtue of the first two conditions (\ref{mthm:2.2.1}) and (\ref{mthm:2.2.2}) and the condition on the homology group of $M^{\prime}$, by choosing .
$D^{n_0^{\prime}+k} \times F$ can be immersed into ${\mathbb{R}}^{n_0+k}$ for any non-negative integer $k$ by virtue of the last condition (\ref{mthm:2.2.3}). 
%【REVISE0】 the last condition for any non-negative integer $k$ → for any non-negative integer $k$ by virtue of the last condition(\ref{mthm:2.2.3}) .
$M^{\prime}$ satisfies the condition ${\rm Sp}_{\geq {n_0}^{\prime}}$. So we can take a special generic map $f_k$ on $M^{\prime}$ into ${\mathbb{R}}^{{n_0}^{\prime}+k}$ for $0 \leq k < m^{\prime}-{n_0}^{\prime}$. By considering the product map of such a special generic map (composed with an embedding into the interior of a copy of the disk $D^{n_0^{\prime}+k}$) and the identity map on $F$ and composing the immersion of $D^{n_0^{\prime}+k} \times F$, we have a special generic map from $M^{\prime} \times F$ into ${\mathbb{R}}^{{n_0}^{\prime}+k+n_0-{n_0}^{\prime}}={\mathbb{R}}^{n_0+k}$.
%【REVISE0】 a product map → the product map .
%【REVISE0】 disc → disk. 
%【REVISE0】 For the integer "a" we use "k" instead. 
%【REVISE0】 special generic map into → special generic map from $M^{\prime} \times F$ into .
The dimension of $M^{\prime} \times F$ is $m^{\prime}+n_0-{n_0}^{\prime}=m$.
%【REVISE0】 We have inserted a sentence on the dimension of the manifold.
This completes the proof.
\end{proof}

For example, in Example \ref{ex:3}, let $m^{\prime}=2k$, ${n_0}^{\prime}=k+1$ and $n_0=2k+1$. 
We can take $l=1$ and $S^k \times S^k$, $S^k$, and
$S^k \times S^k \times S^k$ can be regarded as $M^{\prime}$, $F$, and $M^{\prime} \times F$, respectively, in Main Theorem \ref{mthm:2} (\ref{mthm:2.2}).

%【REVISE0】 We have added a paragraph for a very explicit case.
More generally, we can apply Main Theorem \ref{mthm:2} to connected sums and products of closed and connected manifolds inductively starting from standard spheres respecting conditions on dimensions of the closed and connected manifolds and the Euclidean spaces for example.

%【REVISE0】 We → More generally, we .
%【REVISE0】 Theorem \ref{thm:6} → Main Theorem \ref{mthm:2} .
%【REVISE0】 We finish the sentence by "for example".

As $F$, we can take a compact and connected Lie group and the boundary of a closed tubular neighborhood of an embedding of a closed and connected manifold into a Euclidean space, for example.

%【REVISE0】 We have added a paragraph for explicit manifolds for $F$.
Explicit observations on Main Theorem \ref{mthm:2} are left to readers and future problems for the author.

\section{Acknowledgment.}
The author would like to thank Osamu Saeki and Dominik Wrazidlo for discussions on the present study and encouragement. The author is a member of and supported by JSPS KAKENHI Grant Number JP17H06128 "Innovative research of geometric topology and singularities of differentiable mappings"(Principal investigator is Osamu Saeki). This work is supported by "The Sasakawa Scientific Research Grant".

\end{document}